\newtheorem{theorem}{Theorem}[section]
\newtheorem{corollary}[theorem]{Corollary}
\newtheorem{proposition}[theorem]{Proposition}
\newtheorem{definition}[theorem]{Definition}
\DeclareMathOperator{\teich}{Teich}
\DeclareMathOperator{\thick}{Thick}
\DeclareMathOperator{\Mod}{Mod}
\DeclareMathOperator{\len}{Length}
\DeclareMathOperator{\sys}{sys}
\DeclareMathOperator{\shad}{Shad}
\DeclareMathOperator{\pmf}{PMF}
\DeclareMathOperator{\Nbhd}{Nbhd}
\DeclareMathOperator{\isom}{Isom}
\newcommand{\N}{{\mathbb N}}
\newcommand{\R}{{\mathbb R}}
\newcommand{\Z}{{\mathbb Z}}
\newcommand{\T}{{\mathcal T}}
\renewcommand{\H}{{\mathbb H}}
\newcommand{\cC}{\mathcal{C}}
\newcommand{\teichmuller}{Teichm{\"u}ller{ }}
\begin{document}
	
	\title{Statistical hyperbolicity for harmonic measure.}

\author{Aitor Azemar}
	\address{School of Mathematics and Statistics, University of Glasgow, University Place, Glasgow G12 8QQ}
	\email{Aitor.Azemar@glasgow.ac.uk}
	
	\author{Vaibhav Gadre}
	\address{School of Mathematics and Statistics, University of Glasgow, University Place, Glasgow G12 8QQ}
	\email{Vaibhav.Gadre@glasgow.ac.uk}
	
	\author{Luke Jeffreys}
	\address{School of Mathematics and Statistics, University of Glasgow, University Place, Glasgow G12 8QQ}
	\email{l.jeffreys.1@research.gla.ac.uk}
	
	\keywords{\teichmuller theory, Moduli of Riemann surfaces.}
	\subjclass[2010]{30F60, 32G15, 60G50}

	\begin{abstract}
	We consider harmonic measures that arise from random walks on the mapping class group determined by probability distributions that have finite first moment with respect to the \teichmuller metric, and whose supports generate non-elementary subgroups.
	We prove that \teichmuller space with the \teichmuller metric is statistically hyperbolic for such a harmonic measure.

	\end{abstract}

	\maketitle

	\section{Introduction}
	The notion of \emph{statistical hyperbolicity}, introduced by Duchin-Leli{\`e}vre-Mooney~\cite{Duc-Lel-Moo}, encapsulates whether a space is on average hyperbolic at large scales, that is, for any point in the space and spheres centred at that point whether as the radius $r \to \infty$ the average distance between pairs of points on the sphere of radius $r$ is $2r$. 
	To make sense of the average distance, one requires reasonable measures on spheres.
	
	For many Lebesgue-class measures on \teichmuller space, Dowdall-Duchin-Masur showed that \teichmuller space with the \teichmuller metric is statistically hyperbolic. 
	See \cite[Theorems B, C and D]{Dow-Duc-Mas}. 
	See also \cite{Hid}. 
	Here, we consider the same question for harmonic measures that arise from random walks on the mapping class group determined by probability distributions with finite first moment with respect to the \teichmuller metric, and whose supports generate non-elementary subgroups.
	
	Kaimanovich-Masur showed that a random walk on the mapping class group, whose initial support generates a non-elementary subgroup, converges to the Thurston boundary of \teichmuller space with probability one. 
	This defines a harmonic measure on the Thurston boundary and Kaimanovich-Masur showed that this measure is supported on the set of uniquely ergodic measured foliations. 
	See \cite[Theorem 2.2.4]{Kai-Mas1} for both statements.
	Since \teichmuller rays with uniquely ergodic vertical foliations asymptotically converge to this vertical foliation, it is possible to pull back the harmonic measure to the unit cotangent space at a base-point. 
	This allows us to equip spheres in \teichmuller space with a harmonic measure.
	We can then consider the question of whether \teichmuller space is statistically hyperbolic with respect to these measures.
	
	Our main theorem is:
	
	\begin{theorem}\label{t:main}
		Let $S$ be a surface of finite type. 
		Let $\mu$ be a probability distribution on the mapping class group $\Mod(S)$ with finite first moment with respect to the \teichmuller metric, and such that the support generates a non-elementary subgroup.
		Then \teichmuller space $\T(S)$ with the \teichmuller metric is statistically hyperbolic with respect to the harmonic measure defined by the $\mu$-random walk on $\Mod(S)$. 
	\end{theorem}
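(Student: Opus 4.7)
The strategy is to reduce the statistical hyperbolicity integral to an almost-sure statement about pairs of \teichmuller rays, and then to control the pairwise divergence of two generic rays using the structure of harmonic measure established by Kaimanovich--Masur. Denote the base-point by $x$, and for a uniquely ergodic $\xi\in\pmf$ let $g_r(\xi)$ be the point at \teichmuller distance $r$ along the \teichmuller ray from $x$ with vertical foliation $\xi$. Since $\nu$-a.e.\ $\xi$ is uniquely ergodic, this parametrization is defined $\nu$-a.e., and the harmonic measure on the sphere of radius $r$ about $x$ is the pushforward $(g_r)_*\nu$. The average-distance quantity controlling statistical hyperbolicity becomes
\begin{equation*}
E(r) \;=\; \int_{\pmf\times\pmf} \frac{d_{\T}\bigl(g_r(\xi),g_r(\eta)\bigr)}{2r}\,d\nu(\xi)\,d\nu(\eta),
\end{equation*}
with integrand bounded by $1$. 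By dominated convergence, it suffices to prove that for $\nu\times\nu$-almost every pair, $d_{\T}(g_r(\xi),g_r(\eta))/2r\to 1$ as $r\to\infty$; equivalently, writing the Gromov product at $x$ as $(y\cdot z)_x=\tfrac12(d(x,y)+d(x,z)-d(y,z))$, that $(g_r(\xi)\cdot g_r(\eta))_x$ grows sublinearly in $r$.

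Since $\nu$ is non-atomic on $\pmf$ and concentrated on uniquely ergodic foliations, $\nu\times\nu$-a.e.\ pair $(\xi,\eta)$ consists of distinct uniquely ergodic foliations, and the ensuing analysis may be restricted to such pairs. Masur's theorem on asymptotic rays yields that the rays from $x$ to $\xi$ and to $\eta$ are not asymptotic, but upgrading this qualitative fact to a quantitative bound on the Gromov product is the main technical step. The plan is to use the biinfinite \teichmuller geodesic $\gamma_{\xi,\eta}$ joining the two uniquely ergodic foliations, together with contraction properties of closest-point projection onto $\gamma_{\xi,\eta}$ in the thick part of \teichmuller space (Minsky's product region theorem and Rafi's combinatorial model).

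The input from harmonic measure will be thick-part recurrence of $\nu$-a.e.\ ray from $x$, which I expect to derive from positive drift of the random walk (a consequence of the finite first moment and non-elementary hypotheses) combined with sublinear tracking of sample paths by \teichmuller geodesics. For a $\nu\times\nu$-generic pair, thick recurrence together with contraction in the thick part should show that the closest-point projections of the two rays onto $\gamma_{\xi,\eta}$ stabilize in a bounded subsegment, yielding $(g_r(\xi)\cdot g_r(\eta))_x=O_{\xi,\eta}(1)$, which is certainly sublinear. The principal obstacle is handling the excursions of $\nu$-generic rays into the thin part of \teichmuller space, where contraction estimates fail and the hyperbolic intuition breaks down; showing that for a $\nu\times\nu$-generic pair the cumulative contribution of these excursions to the Gromov product remains sublinear in $r$ is the heart of the argument.
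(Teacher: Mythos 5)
Your reduction is sound and overlaps with the paper's framework: the integrand is bounded by $1$, so by dominated convergence statistical hyperbolicity follows once the Gromov product $(g_r(\xi)\cdot g_r(\eta))_x$ is shown to be sublinear for generic pairs, and the probabilistic input you identify (thick-part recurrence of $\nu$-generic rays, extracted from positive drift and tracking of sample paths) is exactly the paper's thickness property. The gap is in the separation step, which you explicitly defer, and the route you propose for it has a concrete obstruction. Contraction of closest-point projection onto $\gamma_{\xi,\eta}$ is only available where $\gamma_{\xi,\eta}$ is thick, so your plan requires thick-part recurrence of the bi-infinite geodesic joining two \emph{independent} $\nu$-samples. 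But the random-walk machinery produces thickness only for geodesics tracked by bi-infinite sample paths, i.e.\ for geodesics whose endpoint pair is distributed according to $\nu\times\hat{\nu}$ (one forward limit, one backward limit), not $\nu\times\nu$. For a $\nu\times\nu$-generic pair there is no sample path shadowing $\gamma_{\xi,\eta}$, and deducing its thickness from the thickness of the two rays $[x,\xi)$ and $[x,\eta)$ presupposes the fellow-travelling you are trying to establish. You would also need to exclude the possibility that the two rays travel together for a long time through thin parts, which is precisely the regime where Minsky's product regions show that \teichmuller geometry fails to be negatively curved; asserting that the cumulative thin contribution is sublinear is the whole difficulty, not a routine estimate.

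The paper sidesteps this by projecting to the curve complex $\cC(S)$, which is unconditionally $\delta$-hyperbolic: thick segments of the rays project to segments making linear progress along unparameterised quasi-geodesics (Masur--Minsky), so if $\xi_\omega(T)$ and $\xi_\eta(T)$ were $M$-close then the limit point $\lambda^+_\eta$ would lie in a shadow $\shad_{\sys(x)}(\sys(\xi_\omega(T)),r')$ based at $\cC$-distance at least $\kappa T$ from the base-point, and by Maher--Tiozzo the harmonic measure of such shadows tends to $0$ as $T\to\infty$. This converts separation into a measure estimate (the Dowdall--Duchin--Masur separation property), after which their Theorem A replaces $\delta$-hyperbolicity of the ambient space; no control of $\gamma_{\xi,\eta}$ itself is ever needed. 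To repair your argument you should either import this shadow-decay mechanism from the curve complex, or supply an independent proof that $\nu\times\nu$-generic bi-infinite geodesics are thick-recurrent; the latter is not established in the paper and is not obviously easier than the problem you started with.
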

	
	When $S$ is a torus or a torus with one marked point or a sphere with four marked points, $\T(S)$ with the \teichmuller metric is isometric to $\H$. 
	When $\mu$ has finite first moment in the word metric then by a theorem of Guivarch-LeJan \cite{Gui-LeJ}, the harmonic measure from the $\mu$-random walk is singular with respect to the Lebesgue measure class. 
	See also \cite{Bla-Hai-Mat}, \cite{Der-Kle-Nav} and \cite{Gad-Mah-Tio2} for other proofs. 
	
	With respect to the class of Lebesgue measures on $\T(S)$, a similar singularity of harmonic measures also holds when the complex dimension is greater than one. 
	See~\cite[Theorem 1.1]{Gad} for singularity of harmonic measures from finitely supported random walks on $\Mod(S)$ and \cite[Theorem 1.4]{Gad-Mah-Tio} for singularity for harmonic measures for random walks with finite first moment with respect to a word metric on $\Mod(S)$. 
	Finite first moment with respect to the word metric implies finite first moment with respect to the \teichmuller metric. 
	For this large class of random walks, Thereom~\ref{t:main} gives a conclusion that is distinct from the main results of Dowdall-Duchin-Masur~\cite{Dow-Duc-Mas} and, as we outline below, requires different tools.
	
	On the other hand, finite first moment with respect to the \teichmuller metric does not imply finite first moment in the word metric. 
	This is because the mapping class group is distorted under the orbit map to \teichmuller space.
	For the exceptional surfaces mentioned above whose \teichmuller space is $\H$, Furstenberg showed that there is a finite first $d_{\H}$-moment random walk whose harmonic measure on $S^1$ is absolutely continuous. 
	Thus, for the exceptional surfaces Theorem~\ref{t:main} derives statistical hyperbolicity covering both singular and Lebesgue class measures in one statement. 
	For non-exceptional moduli, a solution to the Furstenberg problem has recently been announced by Eskin-Mirzakhani-Rafi \cite{Esk-Mir-Raf}. 
	Thus Theorem~\ref{t:main} covers singular and also Lebesgue class measures analysed by Dowdall-Duchin-Masur in one statement. 
	
	We will first present the proof of Theorem~\ref{t:main} when the complex dimension of $\T(S)$ is greater than one.
	This is the harder case.
	For the exceptional surfaces, that is when $\T(S) = \H$, the proof of Theorem~\ref{t:main} is obviously easier because the ambient geometry is already hyperbolic. 
	However, as mentioned above, many harmonic measures are singular.
	So there is something to prove.
	The argument required in the exceptional case is straightforward and uses the geodesic separation property for harmonic measure that is already formulated in the proof of the harder case of Theorem~\ref{t:main}. 
	
	In fact, we present the exceptional case as a special case of a more general theorem when the ambient geometry is hyperbolic. 
	We prove:
	
	\begin{theorem}\label{t:hyperbolic}
		Let $\Gamma$ be a lattice in $\isom(\H^n)$ for $n \geqslant 2$. 
		Let $\mu$ be a probability distribution on $\Gamma$ with finite first $\H^n$-moment such that the support of $\mu$ generates a subgroup that contains a pair of loxodromic elements with distinct axes.
		Then, with respect to the harmonic measure defined by the $\mu$-random walk on $\Gamma$, the space $\H^n$ with the hyperbolic metric is statistically hyperbolic. 
	\end{theorem}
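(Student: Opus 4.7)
The strategy is to pull back the harmonic measure $\nu$ on the visual boundary $\partial\H^n$ to the sphere $S_r(o)$ of radius $r$ about a chosen basepoint $o \in \H^n$ via the geodesic-ray map $\xi \mapsto \gamma_\xi(r)$, where $\gamma_\xi$ is the unique unit-speed geodesic ray from $o$ to $\xi$. Writing $\nu_r$ for the pushforward, the average pairwise hyperbolic distance on $S_r(o)$ equals
\begin{equation*}
A(r) \;=\; \int_{\partial\H^n}\!\int_{\partial\H^n} d_{\H^n}\!\bigl(\gamma_\xi(r),\gamma_\eta(r)\bigr)\,d\nu(\xi)\,d\nu(\eta),
\end{equation*}
so statistical hyperbolicity is the assertion that $A(r)/(2r) \to 1$. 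Since $d_{\H^n}(\gamma_\xi(r),\gamma_\eta(r)) \le 2r$ by the triangle inequality, the rescaled integrand is uniformly bounded by $1$, and the problem reduces to a dominated-convergence argument once we have a pointwise limit $\nu\times\nu$-almost everywhere.

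First I would verify that $\nu$ has no atoms. Since $\supp(\mu)$ generates a subgroup $H \le \Gamma$ containing two loxodromics with distinct axes, the four fixed points of those axes prevent $H$ from preserving a point or a pair of points in $\partial\H^n$, so $H$ is non-elementary. Under the finite first $\H^n$-moment hypothesis, the classical Furstenberg/Kaimanovich theory for random walks on $\isom(\H^n)$ yields almost-sure convergence of sample paths to $\partial\H^n$, a well-defined hitting measure $\nu$, and the fact that $\nu$ is non-atomic. By Fubini, non-atomicity then forces $(\nu \times \nu)\{\xi = \eta\} = 0$.

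Next I would supply the pointwise limit of the integrand via hyperbolic trigonometry: for any $\xi \neq \eta$ with visual angle $\theta = \theta_o(\xi,\eta) > 0$ at $o$, the hyperbolic law of cosines gives
\begin{equation*}
\cosh d_{\H^n}\!\bigl(\gamma_\xi(r),\gamma_\eta(r)\bigr) \;=\; \cosh^2 r \;-\; \sinh^2 r\,\cos\theta,
\end{equation*}
from which $d_{\H^n}(\gamma_\xi(r),\gamma_\eta(r)) = 2r + \log\bigl((1-\cos\theta)/2\bigr) + o(1)$ as $r \to \infty$. In particular the rescaled integrand tends to $1$ pointwise on $\{\xi \neq \eta\}$, hence $\nu\times\nu$-almost everywhere by the previous step. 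Dominated convergence against the constant function $1$ then delivers $A(r)/(2r) \to 1$.

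The only substantive obstacle in this scheme is establishing non-atomicity of $\nu$ under the stated hypotheses; the rest is either a direct hyperbolic-geometry computation or a routine application of the dominated convergence theorem. Should the Furstenberg/Kaimanovich non-atomicity result not be applicable off-the-shelf in the precise form needed for non-uniform lattices with finite first $\H^n$-moment, one can instead derive non-atomicity from the geodesic-separation/geodesic-tracking property for the random walk, which is precisely the tool the authors indicate will be developed for the harder case of Theorem~\ref{t:main}.
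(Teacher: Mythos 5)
Your proposal is correct, but it takes a genuinely different route from the paper. The paper derives Theorem~\ref{t:hyperbolic} by running the Dowdall--Duchin--Masur scheme of Section~\ref{s:stat-hyp}: since $\H^n$ is globally $\delta$-hyperbolic the thickness property is vacuous, and the only remaining input is the separation property, which the authors obtain exactly as in Corollary~\ref{c:separate} by showing that a pair of non-separated radial segments forces one limit point into a shadow whose harmonic measure decays (via \cite[Proposition 5.1]{Mah-Tio}). You instead bypass that machinery entirely: the hyperbolic law of cosines gives the exact asymptotic $d_{\H^n}(\gamma_\xi(r),\gamma_\eta(r)) = 2r + 2\log\sin(\theta/2) + o(1)$ for each fixed pair $\xi\neq\eta$, the integrand $d/(2r)$ is bounded by $1$, and bounded convergence reduces everything to the single fact that $\nu\times\nu$ gives no mass to the diagonal, i.e.\ that $\nu$ is non-atomic. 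That fact is not a gap: non-atomicity of the hitting measure for a non-elementary measure on a group acting on a hyperbolic space is part of the Maher--Tiozzo convergence theorem already cited in the paper (and classically due to Furstenberg for $\isom(\H^n)$), and it requires no moment hypothesis at all. Your computation of the law of cosines asymptotics checks out ($\cosh d = 1+\sinh^2 r\,(1-\cos\theta)$, so $d = 2r + \log((1-\cos\theta)/2)+o(1)$). What your approach buys is a completely self-contained, elementary proof that in fact shows more: $\H^n$ is statistically hyperbolic for the spherical pull-back of \emph{any} non-atomic boundary measure, with the random walk playing no role beyond supplying non-atomicity; in particular the finite-first-moment hypothesis is not needed for this theorem. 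What the paper's route buys is uniformity with the Teichm\"uller case, where no pointwise asymptotic of this kind is available and the shadow/separation argument is genuinely necessary.
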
 
	
	We will present the proof of Theorem~\ref{t:hyperbolic} after the proof of the harder case of Theorem~\ref{t:main}. 
	This lets us use the geodesic separation property for the harmonic measure formulated in the earlier proof.
	We note that when $n > 2$ and $\Gamma$ is a non-uniform lattice, Randecker-Tiozzo proved that a harmonic measure arising from a $\mu$ whose support generates $\Gamma$ and has finite $(n-1)^{\text{th}}$ moment with respect to a word metric, is singular. 
	See~\cite[Theorem 2]{Ran-Tio}. 
	For uniform lattices, many classes of harmonic measures are known or conjectured to be singular. 
	For instance, a famous conjecture of Guivarc'h-Kaimanovich-Ledrappier asserts that harmonic measures that arise from finitely supported random walks on a uniform lattice in $SL(2,\R)$ are singular. 
	See \cite{Der-Kle-Nav}.
	So Theorem~\ref{t:hyperbolic} has new content.
	
	From now on we assume that the complex dimension of $\T(S)$ is greater than one and present Theorem~\ref{t:main} with that assumption. 
	
	\subsection{Strategy of the proof} 
	To derive statistical hyperbolicity, Dowdall-Duchin-Masur set up two properties to check. 
	The first property is called the thickness property. 
	See \cite[Definition 5.4]{Dow-Duc-Mas}. 
	It states that as the radius of a sphere goes to infinity a typical radial geodesic segment spends a definite proportion of its time in the thick part of \teichmuller space. 
	The second property is called the separation property. 
	See \cite[Definition 6.1]{Dow-Duc-Mas}. 
	It states that as the radius of a sphere goes to infinity a typical pair of radial geodesic segments exhibit good separation. 
	For Lebesgue-class visual measures, the ergodicity of the \teichmuller geodesic flow is the key tool in their proof of the thickness property. 
	For rotationally invariant Lebesgue measures, they verify the separation property by disintegrating the measure along and transverse to \teichmuller discs and then use the hyperbolic geometry of these discs. 
	
	For random walks, different tools are needed. 
	The main tool is the ergodicity of the shift map on the space of bi-infinite sample paths. 
	This ergodicity can be leveraged to prove that a typical bi-infinite sample path recurs to a neighbourhood of its tracked geodesic with a positive asymptotic frequency.
	As sample paths lie in a thick part, recurrence implies that the tracked geodesics spend a positive proportion of their time in a thick part.
	By tweaking the size of the neighbourhood, and hence the thick part, we show that the time spent in the thick part by the tracked geodesic can achieve any positive proportion. 
	While a positive proportion of thickness is suggested by the main theorem in \cite{Gad-Mah-Tio}, the precise quantitative version that we need here requires some work.
	
	For the separation property, we project two fellow travelling radial geodesic segments to the curve complex. 
	By a theorem of Maher-Tiozzo, a typical sample path makes linear progress in the curve complex.
	Combining this theorem with the recurrence, we show that the projections of fellow travelling radial geodesic segments must nest into a shadow. 
	 Also by a proposition in Maher-Tiozzo,  the harmonic measure of a shadow tends to zero in the distance of the shadow from the base-point. 
	This then enables us to conclude the required separation property.
	
	\section{Preliminaries} 
	
	\subsection{Statistical hyperbolicity:} 
	Let $(X,d)$ be a metric space.
	Let $x \in X$.
	Let $r > 0$.
	We call the set $S_r(x) = \{ x' \in X \text{ such that } d(x,x') =r\}$ the \emph{sphere} of radius $r$ centred at $x$.
	Suppose $\nu_r$ is a family of probability measures supported on $S_r(x)$. 
	Provided the limit exists, one defines a numerical index $E(X):=E(X,x,d,\{\nu_{r}\})$ by 
	\[
	E(X) = \lim_{r \to \infty} \frac{1}{r} \int\limits_{S_r(x) \times S_r(x) } d(x', x'') \, d \nu_r(x') d\nu_r(x'') 
	\]
	A space is said to be \emph{statistically hyperbolic} if $E(X) = 2$. This is motivated by the fact that $E(\H^{n})=2$ for any dimension equipped with the natural measures on spheres. Moreover, it was demonstrated by Duchin-Leli{\`e}vre-Mooney~\cite[Theorem 4]{Duc-Lel-Moo} that $E(G)=2$ for any non-elementary hyperbolic group $G$ with any choice of generating set. 
	
	For uniform lattices in $\isom(\H^n)$, the Green metric defined by the random walk is quasi-isometric to the induced hyperbolic metric through the orbit map. This suggests a derivation of statistical hyperbolicity by reducing the problem to the Duchin-Leli{\`e}vre-Mooney result. As our proof of Theorem \ref{t:hyperbolic} covers both uniform and non-uniform lattices, we omit the details for this alternate approach. In any case, it would work only for uniform lattices.
	
	We direct the reader to~\cite{Duc-Lel-Moo} for further discussion on the sensitivity of $E$. Indeed, it is not quasi-isometrically invariant, and has dependence on the base-point $x$ and the choice of measures $v_r$. Furthermore, $\delta$-hyperbolicity and exponential volume growth are not sufficient to guarantee statistical hyperbolicity.
	
	\subsection{Background on \teichmuller spaces} 
	Let $S$ be a surface of finite type, that is, $S$ is an oriented surface with finite genus and finitely many marked points.
	The \emph{\teichmuller space} $\T(S)$ of $S$ is the space of marked conformal structures on $S$. 
	By the Uniformisation Theorem, if $S$ has negative Euler characteristic then there is a unique hyperbolic metric in each marked conformal class.
	The \emph{mapping class group} $\Mod(S)$ is the group of orientation preserving diffeomorphisms of $S$ modulo isotopy.
	The mapping class group $\Mod(S)$ acts on $\T(S)$ by changing the marking. 
	The quotient space $\mathcal{M}(S) = \T(S) / \Mod(S)$ is the \emph{moduli space of Riemann surfaces}.
	Given $\epsilon > 0$, a marked hyperbolic surface $x \in \T(S)$ is $\epsilon$-\emph{thick} if the hyperbolic length of every closed geodesic on $x$ is at least $\epsilon$. 
	We let $\T_\epsilon(S)$ be the subset of $\T(S)$ consisting of all $\epsilon$-thick marked hyperbolic surfaces.
	We note that there exists an $\epsilon> 0$ that depends only on the complexity of the surface such that $\T_\epsilon(S)$ is non-empty. 
	See the discussion on the Bers constant in \cite[Chapter 12 Section 4.2]{Far-Mar}. 
	Observe that if $x$ is $\epsilon$-thick then so is $gx$ for any mapping class $g$. 
	Hence, we deduce that we get a thick-thin decomposition of the moduli space $\mathcal{M}(S)$. 
	The Mumford compactness theorem says that for any $\epsilon > 0$, the thick part $\mathcal{M}_\epsilon(S)$ is compact.
	
	Given a marked conformal surface $x$, let $\mathcal{Q}(x)$ be the set of meromorphic quadratic differentials on $x$ with simple poles at and only at the marked points. 
	This gives a bundle $\mathcal{Q}$ over $\T(S)$. 
	This bundle is stratified by the orders of the zeroes of the quadratic differentials.
	By contour integration and a choice of square root, each $q \in \mathcal{Q}(x)$ defines a half-translation structure on $S$. 
	That is, it defines charts to $\mathbb{C} = \mathbb{R}^2$ with half-translation transition functions of the form $z \to \pm z + c$. 
	It then makes senses to impose the condition that the half-translation surfaces that we consider have unit area.
	Given a stratum of quadratic differentials, one may fix a basis for the homology of $S$ relative to the marked points and zeroes. 
	One can associate a period to each basis element of the homology by integrating a square root of the quadratic differential over a contour representing it. 
	These periods give local co-ordinates on the stratum and can be used to define the Lebesgue measure class on it.
	The principal stratum is the stratum of quadratic differentials whose zeroes are all simple. 
	The Lebesgue measure class on the principal stratum can be pushed down to define a Lebesgue measure class on the \teichmuller space $\T(S)$.
	
	The $SL(2,\R)$-action on $\mathbb{R}^2$ preserves the area and also the form of the transition functions.
	Hence, it descends to an action on $\mathcal{Q}$.
	The compact part $SO(2,\R)$ acts by rotations and preserves the conformal structure.
	The diagonal part of the action given by 
	\[
	\left[ \begin{array}{cc} e^{t/2} & 0 \\ 0 & e^{-t/2} \end{array} \right]
	\]
	is called the \emph{\teichmuller flow} and we will denote it by $\phi_t$. 
	Given a pair $x, y$ of marked hyperbolic surfaces, Teichm{\"u}ller's theorem states that there is a unit area quadratic differential $q$ on $x$ and a time $t$ such that $\phi_t q$ projects to $y$. 
	The time $t$ is called the \emph{\teichmuller distance} between $x$ and $y$. 
	
	The \teichmuller distance gives a Finsler metric on $\T(S)$ which we will call the \emph{\teichmuller metric} and denote by $d_{\teich}$. 
	The mapping class group acts by isometries and hence the \teichmuller distance descends to $\mathcal{M}$. 
	Masur-Wolf \cite{Mas-Wol} showed that \teichmuller space with the \teichmuller metric is not a $\delta$-hyperbolic space. 
	This adds value to the question of statistical hyperbolicity.
	Dowdall-Duchin-Masur showed that for measures in the Lebesgue-class, $\T(S)$ is statistically hyperbolic. 
	See \cite[Theorems B, C and D]{Dow-Duc-Mas}.
	
	\subsection{Random walks on the mapping class group}
	Let $G$ be a finitely generated group. 
	Let $\mu$ be a probability measure on $G$.
	A sample path $w_n$ of length $n$ for the $\mu$-random walk on $G$ is a random product $w_n = g_1 g_2 \cdots g_n$ where each $g_i$ is independently sampled by $\mu$.
	The $n$-fold convolution $\mu^{(n)}$ of $\mu$ gives the distribution of $w_n$.   
	If $G$ has an action on a space $X$, one can use the orbit of a base-point to project the random walk onto $X$. 
	We are interested in the limiting behaviour of sample paths as $n \to \infty$. 
	For this reason, we will consider the shift on $G^{\N}$.
	It is convenient to consider both forward and backward random walks. 
	The backward random walk is simply the random walk with respect to the reflected measure $\hat{\mu}$ defined by $\hat{\mu}(g) = \mu(g^{-1})$. 
	We then consider bi-infinite sequences as elements of $G^{\Z}$ with the shift $\sigma$ acting as a step of the random walk.
	For the push-forward $h$ of the product measure $\mu^\Z$ on $G^\Z$ the conditional measure for the shift is given by $\mu$. 
	We can separate the forward and backward directions to write $h$ as the product $\nu \times \hat{\nu}$.
	We call the measure $\nu$ the harmonic measure. 
	
	By the Nielsen-Thurston classification, a mapping class is finite order, reducible or pseudo-Anosov. 
	A finite order mapping class is an automorphism of some Riemann surface.
	A reducible mapping class has some power that fixes a multi-curve on the surface. 
	A pseudo-Anosov mapping class $f$ has a \teichmuller axis: an $f$-invariant bi-infinite \teichmuller geodesic along which the map translates realising the infimum of $d_{\teich}(x, f(x))$ over $\T(S)$ by this translation.
	This description of a pseudo-Anosov map implies that the \teichmuller axis is unique and that its vertical and horizontal measured foliations are uniquely ergodic. 
	
	A subgroup of $\Mod(S)$ is non-elementary if it contains a pair of pseudo-Anosov mapping classes with distinct \teichmuller axes. Let $x \in \T(S)$ be a base-point. 
	Kaimanovich-Masur showed that if the support of a probability distribution $\mu$ on $\Mod(S)$ generates a non-elementary subgroup then for $h$-almost every sample path $\omega = (w_n)$ the sequence $w_n x$ converges to a projective class of a measured foliation on $S$. 
	See \cite[Theorem 2.2.4]{Kai-Mas1}. 
	Thurston showed that there is a natural way in which the space $\pmf(S)$ of projective classes of measured foliations serves as a boundary of $\T(S)$. 
	So the Kaimanovich-Masur theorem can be rephrased as convergence to the boundary $\pmf(S)$ for $h$-almost every sample path.
	In particular, the measure $\nu$ on $\Mod(S)^{\N}$ pushes forward to a measure on $\pmf(S)$. 
	We call this the harmonic measure on $\pmf(S)$.
	
	The distribution $\mu$ has \emph{finite first moment} with respect to the \teichmuller metric if 
	\[
	\sum\limits_{g \in \Mod(S)} \mu(g) d_{\teich} (x, g x) < \infty.
	\]
	By \cite[Theorem 1.2]{Mah-Tio}, finite first moment in the curve complex is sufficient for positive linear drift of typical sample paths when projected to the curve complex using the mapping class group action. 
	Since \teichmuller distance is a coarse upper bound for the curve complex distance, finite first $d_{\teich}$-moment implies finite first moment in the curve complex.
	Hence we have positive linear drift of sample paths in the curve complex and consequently in \teichmuller space. 
	
	Going back to the work of Kaimanovich-Masur, they showed that if $\mu$ has finite entropy and finite first logarithmic moment with respect to the \teichmuller metric then the push-forward measure is measurably isomorphic to $\nu$. 
	See \cite[Theorem 2.3.1]{Kai-Mas1}.
	For this reason, and to keep the notation simple, we denote the measure on $\pmf(S)$ by $\nu$ even though we do not need the measurable isomorphism.

	\subsection{Statistical hyperbolicity for a harmonic measure}
	
	Let $\mathcal{Q}^1(x)$ be the set of unit area quadratic differentials for the marked Riemann surface $x$. 
	When $q \in \mathcal{Q}^1(x)$ has a uniquely ergodic vertical foliation, the \teichmuller ray $\phi_t q$ converges as $t \to \infty$ to the projective class of the vertical foliation.
	Since $\nu$ is supported on the set of uniquely ergodic foliations, $\nu$ can be pulled back to a measure on 
	$\mathcal{Q}^1(x)$. 
	This gives us a measure on every sphere $S_r(x)$. 
	Thus, it makes sense to consider whether $\T(S)$ with the \teichmuller metric is statistically hyperbolic with respect to harmonic measure.
	
	\subsection{Statistical hyperbolicity in \teichmuller space}
	
	Dowdall-Duchin-Masur~\cite{Dow-Duc-Mas} reduce statistical hyperbolicity of \teichmuller space with the \teichmuller metric 
	for a family of measures $\{\nu_r\}$ to the verification of two properties: the thickness property~\cite[Definition 5.2]{Dow-Duc-Mas} and the separation property~\cite[Definition 6.1]{Dow-Duc-Mas}. 
	We will now state these properties and in Section~\ref{s:stat-hyp}, we will give a quick sketch of how these properties imply statistical hyperbolicity. For those unfamiliar with Dowdall-Duchin-Masur, we recommend reading through the sketch immediately after Definitions \ref{d:thick} and \ref{d:separate}.
	
	For a choice of $\epsilon>0$ and a geodesic segment $[x,x']\subset\T(S)$, we denote the proportion of time $[x,x']$ spends in $\T_\epsilon(S)$ by
	\[\thick_\epsilon^{\%}([x,x']):=\frac{|\{0 \leqslant t \leqslant d_{\teich}(x,x')\,:\,x'_s \in\T_\epsilon(S)\}|}{d_{\teich}(x,x')},\]
	where $x'_s$ is the point at distance $s$ from $x$ along $[x,x']$.
	
	The thickness property is the following.
	
	\begin{definition}[Thickness property]\label{d:thick} 
		A family of measures $\{\nu_r\}$ on spheres in $\T(S)$ has the thickness property if for all $0<\theta,\eta<1$ there exists an $\epsilon>0$ such that
		\[\lim_{r\to\infty}\frac{\nu_r\left(\{x'\in S_r(x)\,\,|\,\,\thick_\epsilon^{\%}([x,x'_t])\geqslant\theta\text{ for all }t\in[\eta r,r]\}\right)}{\nu_r(S_r(x))}=1,\]
		for all $x\in\T(S)$.
	\end{definition}
	
	The separation property is the following.
	
	\begin{definition}[Separation property]\label{d:separate} 
		A family of measures $\{\nu_r\}$ on spheres in $\T(S)$ has the separation property if for all $M>0$ and $0<\eta<1$, we have
		\[\lim_{r\to\infty}\frac{\nu_r\times\nu_r\left(\{(x',x'')\in S_r(x)\times S_r(x)\,\,|\,\,d_{\teich}(x'_t,x''_t)\geqslant M\text{ for all }t\in[\eta r,r]\}\right)}{\nu_r\times\nu_r(S_r(x)\times S_r(x))}=1,\]
		for all $x\in\T(S)$.
	\end{definition}
	
	In the next section, we derive these properties for a harmonic measure that arises from a random walk on the mapping class group determined by a probability distribution with finite first moment with respect to the \teichmuller metric whose support generates a non-elementary subgroup.
	
	\section{Derivation of the Thickness and Separation Properties}
	
	\subsection{Recurrence}
	Let $x$ be a base-point in \teichmuller space. 
	Let $\omega$ be a bi-infinite sample path. 
	As a convenient notation, we let $x_n = w_n x$ for any $n \in \Z$. 
	In particular, this means that $x_0$ is the same as the base-point $x$. 
	For almost every $\omega$, the sequences $x_n$ and $x_{-n}$ as $n \to \infty$ converge projectively to distinct uniquely ergodic measured foliations $\lambda^+$ and $\lambda^-$ respectively.
	For such sample paths, let $\gamma_\omega$ be the bi-infinite \teichmuller geodesic between $\lambda^+$ and $\lambda^-$. 
	As convenient notation, let $\gamma = \gamma_\omega$ and let $\gamma_n $ be a point of $\gamma$ that is closest to $x_n$. 
	The diameter of the set of closest points is coarsely bounded above by $d_{\teich}( x_n, \gamma_n)$. As sample paths deviate sub-linearly from their tracked geodesics, the choice of the closest point does not affect our estimates. 
	This will become quantitatively precise subsequently.

	Let
	\[
	\Lambda_R = \left\{ \omega \text{ such that } d_{\teich} (x, \gamma_\omega) < R \right\} 
	\]
	By \cite[Lemma 1.4.4]{Kai-Mas1}, the function $\omega \to d_{\teich} (x, \gamma_\omega)$ is measurable. 
	Recall that $h$ is our notation for the harmonic measure $\nu \times \hat{\nu}$ on bi-infinite sample paths.
	So if $R$ is large enough then $h(\Lambda_R) > 0$ and $h(\Lambda_R) \to 1$ as $R \to \infty$. 
	
	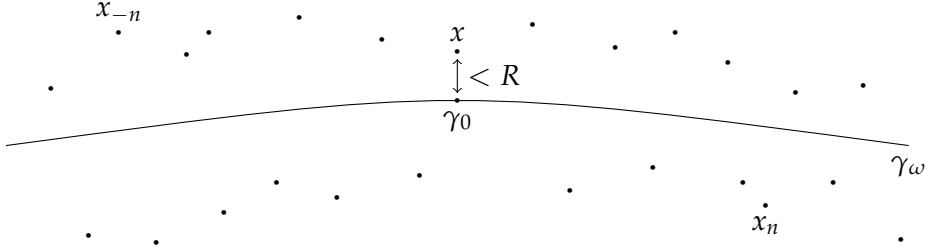
\begin{figure}[H]
		\begin{center}
			\begin{tikzpicture}
			\node[above] at (0,-0.25) {$x$};
			\node at (0,-0.25) {$\boldsymbol{\cdot}$};
			\draw (-6,-1.5) .. controls (0,-0.7) .. (6,-1.5);
			\node[below] at (6,-1.5){$\gamma_\omega$};
			\node at (0,-0.9){$\boldsymbol{\cdot}$};
			\node[below] at (0,-0.9){$\gamma_0$};
			\draw[<->] (0,-0.35)--node[right]{$<R$}(0,-0.8);
			\foreach \x/\y in {1/0.1,2.1/-0.2,2.9/0,3.6/-0.4,4.5/-0.8,5.4/-0.7,
				-1/-0.1,-2.1/0.2,-3.3/0,-3.6/-0.3,-4.5/0,-5.4/-0.75}
			{\node at (\x,\y){$\boldsymbol{\cdot}$};
				\node at (-\x+0.5,-2+\y){$\boldsymbol{\cdot}$};}
			\node[below] at (4.1,-2.3){$x_n$};
			\node[above] at (-4.5,0){$x_{-n}$};
			\end{tikzpicture}
		\end{center}
		\caption{A sample path $\omega$ in $\Lambda_{R}$.}
	\end{figure}\label{Lambda_R}
	
	An integer $k$ will be called an \emph{$R$-recurrence time} for $\omega $ if $\sigma^k \omega \in \Lambda_R$ where recall that $\sigma$ is the shift map.
	Suppose $j < k$ are $R$-recurrence times for $\omega$ and suppose $d_{\teich}(x_j, x_k) = 2d$. This distance will be bounded by the sum of the length of each step we do, that is,
	\[
	2d \leqslant \sum_{i=j+1}^{k} d_{\teich}(x_{i-1},x_i) = \sum_{i=j+1}^{k} d_{\teich}(x, g_i x). 
	\]
		
	We let $[\gamma_j, \gamma_k] $ be the segment of $\gamma_\omega$ connecting $\gamma_j $ and $\gamma_k$. 
	We note that
	\[
	\len [\gamma_j, \gamma_k] \leqslant 2R + 2d.
	\]
	If $2d \leqslant 2R$, then 
	\[
	[\gamma_j, \gamma_k] \subset B(x_j, 3R) \cup B(x_k, 3R).
	\]
	So suppose $2d > 2R$. 
	We consider the sub-segments of $[\gamma_j, \gamma_k]$ that might be outside of the union $B(x_j, 3R) \cup B(x_k, 3R)$. 
	We denote the union of these sub-segments by $C_{j,k}$ and let $L(j,k)$ to be the sum of their lengths.	
	
	Let $0 < \rho < p < 1$. 
	We choose $R$ large enough such that $h(\Lambda_R) \geqslant p$. 
	Let $n \in \N$ and set
	\[
	E^{(1)}_n = \left\{ \omega \text{ such that } \frac{1}{m} \sum\limits_{0 \leqslant k \leqslant m} \chi_{\Lambda_R}(\sigma^k \omega) < p - \rho \text{ for some } m \geqslant n \right\} 
	\]
	where $\chi$ is the indicator function.
	By ergodicity of the shift map $\sigma$ it follows that $h(E^{(1)}_n )  \to 0$ as $n \to \infty$.
	
	Suppose $\omega$ is in the complement of $E^{(1)}_n$.
	Then the number of times $i \in \{ 0, \cdots, n\}$ such that $\sigma^i \omega \notin \Lambda_R$ is at most $(1+\rho-p) n$.
	Let $j_{\min} $ and $j_{\max}$ be the smallest and largest $R$-recurrence times in $\{0,\dots, n\}$. 
	Then we note that
	\[
	j_{\min} \leqslant (1+\rho-p) n \hskip 8pt \text{and} \hskip 8pt j_{\max} \geqslant n - (1+\rho - p) n.
	\]
	By bounding with steps, we get the estimate
	\[
	d_{\teich}(x ,x_{j_{\min}}) \leqslant \sum_{i=1}^{j_{\min}}d_{\teich}(x,g_ix).
	\]
	
	We will separate the sum into two sums. The first will contain terms $d_{\teich} (x, g_i x)$ for which $d_{\teich} (x, g_i x) \leqslant D$ and the second will contain the rest of the terms. 
	For convenience of notation, we let $B$ be the set of bi-infinite sample paths $\omega$ whose first step $g_1$ satisfies $d_{\teich}(x, g_1 x) \leqslant D$. 
	The set $B$ depends on the choice of $D$ but we will suppress this from the notation for the moment and point it out when required later. 
	With this notation, the sum above becomes 
	\[
	d_{\teich}(x ,x_{j_{\min}}) \leqslant \sum_{i=1}^{j_{\min}}d_{\teich}(x,g_ix)\chi_B (\sigma^i(\omega))  + \sum_{i=1}^{j_{\min}}d_{\teich}(x,g_ix)\chi_{\Omega \setminus B} (\sigma^i (\omega)),
	\]
	
	We can bound the first sum by $\sum_{i=1}^{j_{\min}} D \leqslant (1+\rho - p)D n$, and the second one by
	\[
	 \sum_{i=1}^{n} d_{\teich}(x,g_ix)\chi_{\Omega \setminus B} (\sigma^i(\omega)).
	\]
	
	We denote each of the terms of the sum above as $b_i^D$, that is $b_i^D(\omega) =  d_{\teich}(x,g_ix)\chi_{\Omega \setminus B} (\sigma^i(\omega))$. 
	The random variables $b_i^D$ are all independent and identically distributed. 
	Furthermore, $b_i^D( \omega) \leqslant d_{\teich}(x,g_ix)$, which, since the measure has finite first $d_{\teich}$-moment, is integrable. 
	By the strong law of large numbers, the sum above when divided by $n$ converges almost surely to $\mathbb{E}[b_i^D]$. 
	Let $C$ denote this expectation. 
	
	We conclude that the sets
	\[
	E^{(2)}_n = \left\{ \omega \text{ such that } \frac{1}{m} \sum_{i=0}^{m}b_i^D(\omega) > C+c \text{ for some } m \geqslant n \right\} 
	\]
	satisfy $h(E^{(2)}_n)\to 0$ as $n\to \infty$ for all $c>0$. 
	
	As $D$ tends to infinity, the random variable $b_i^D$ converges to zero point-wise. By the dominated convergence theorem where we are dominating by $d_{\teich}(x,gx)$, we get that if $D$ is large enough, then the expectation $C$ is small.
	
	Assume $\omega$ is in the complement of both $E^{(1)}_n$ and $E^{(2)}_n$. Then,
	\[
	d_{\teich}(x ,x_{j_{\min}}) \leqslant ((1+\rho - p)D +C+c)n ,
	\]
	and, by the same reasoning,
	\[
	d_{\teich}(x_{j_{\max}} ,x_{n}) \leqslant ((1+\rho - p)D +C+c)n.
	\]
	This implies
	\[
	d_{\teich} (\gamma_0, \gamma_{j_{\min}}) \leqslant ((1+\rho - p)D +C+c)n + 2R
	\]
	and 
	\[
	d_{\teich} (\gamma_{j_{\max}}, \gamma_n) \leqslant ((1+\rho - p)D +C+c)n + 2R.
	\]

        We will now estimate from above the time that the segment $[ \gamma_{j_{\min}}, \gamma_{j_{\max}}]$ spends outside the neighbourhood of points along the sample path. 
	A pair $j < k$ of recurrence times is \emph{consecutive} if every $J$ satisfying $j < J < k$ is not a recurrence time. 
	
	If $k=j+1$, the set $C_{j, j+1}$ will be non empty only if $d_{\teich}(x_j,x_{j+1})\geqslant 2R$.
	 In that case, if we choose $D$ smaller than $R$, then $L(j,j+1)\leqslant b_{j+1}^D$. 
	 
	 If $k>j+1$, we have $k-j-1$ steps taken outside the $R$-neighbourhood of the geodesic.
	 We can split these steps depending on whether they are in $B$ or not. 
         Recall that since we are outside the exceptional set $E^{(1)}_n$, we know that the total number of non-recurrence times is bounded above by $(1+\rho-p)n$. 
	 In the estimate for the sum of $L(j,k)$, we note that each non-recurrence time contributes to at most two terms in the sum. 
	We deduce that
	\begin{align*}
	\sum\limits_{\substack{j < k \\\text{consecutive}}} L(j,k)
	&\leqslant
	\sum\limits_{\substack{ j+1 = k \\\text{consecutive}}} b_{j+1}^D + \sum\limits_{\substack{j+1 < k \\\text{consecutive}}} \sum\limits_{i=j+1}^k  \left(b_i^D+d_{\teich}(x,g_ix)\chi_{B}(\sigma^i(\omega))\right) \\
	&\leqslant
	\sum_{i=1}^n b_i^D + 2(1+\rho-p)n D \\
	&\leqslant (2(1+\rho - p)D +C+c)n.
	\end{align*}

	\subsection{Linear progress} 
	The \teichmuller metric is sub-additive along sample paths. 
	By Kingman's sub-additive ergodic theorem, there exists a constant $A \geqslant 0$ such that for almost every sample path $\omega$ we have
	\[
	\lim_{n \to \infty} \frac{d_{\teich} (x_0 , x_n )}{n} = A.
	\]
	By \cite[Theorem 1.2]{Mah-Tio}, $A > 0$. 
	
	Let $0 < a < 1$ be a constant smaller than $A$. 
	Let $n \in \N$. 
	Consider the set of sample paths
	\[
	\Omega^{(3)}_n = \left\{ \omega \text{ such that } (A- a) m < d_{\teich}(x_0, x_m) < (A+a) m  \text{ for all } m \geqslant n  \right\}.
	\]
	Let $E^{(3)}_n$ be the complement $\Omega \setminus \Omega^{(3)}_n$. 
	It follows that $h(E^{(3)}_n) \to 0$ as $n \to \infty$. 
	
	\subsection{Thickness along tracked geodesics} 
	Let $0 < \theta' < 1$. 
	
	We parameterise the tracked geodesic $\gamma = \gamma_\omega$ by unit speed such that at time zero we are at $\gamma_0$, a closest point to $x_0 = x$,  and $\gamma(t) \to \lambda^+$ as $t \to \infty$. 
	
	Let $\Lambda(r, \theta', \epsilon')$ be the set of sample paths $\omega$ such that for all $s  > r$ we have 
	\[
	\thick^{\%}_{\epsilon'} [\gamma(0), \gamma(s)] \geqslant \theta'.
	\]
	
	\begin{proposition}\label{p:tracked-thick}
		Given $0 < \theta' < 1$ there exists an  $\epsilon' > 0$ such that 
		\[
		\lim_{r \to \infty} h(\Lambda(r, \theta', \epsilon')) =1.
		\]
	\end{proposition}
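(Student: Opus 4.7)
The plan is to combine the recurrence estimate derived above with the observation that every sample point $x_j = w_j x$ lies in the same $\epsilon_0$-thick part as the basepoint $x$, where $\epsilon_0$ is the length of the shortest closed geodesic on $x$; this is because the mapping class group acts by isometries of the hyperbolic metric on each fibre. By Wolpert's lemma, any point within \teichmuller distance $3R$ of an $\epsilon_0$-thick point lies in $\T_{\epsilon'}(S)$ for $\epsilon' = \epsilon_0 e^{-6R}$. Hence whenever the tracked geodesic lies inside the union $\bigcup_j B(x_j, 3R)$ of balls around $R$-recurrence sample points, it is automatically $\epsilon'$-thick, and the problem reduces to bounding the length of the part of the tracked geodesic that escapes this union.

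Given $0 < \theta' < 1$, I will choose parameters in the following order. Fix $\delta > 0$ with $8\delta < A(1-\theta')$. First take $a, c$ smaller than $\delta$. By the dominated convergence discussion preceding the statement, choose $D$ large enough that $C = \mathbb{E}[b_i^D] < \delta$. Then choose $\rho$ small and $p$ close enough to $1$ that $2(1+\rho-p)D < \delta$. Finally take $R > D$ large enough that $h(\Lambda_R) \geqslant p$, and set $\epsilon' = \epsilon_0 e^{-6R}$.

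With these choices, for $\omega$ outside $E^{(1)}_n \cup E^{(2)}_n \cup E^{(3)}_n$ and $n$ large, the recurrence bound shows that the portion of $[\gamma_{j_{\min}}, \gamma_{j_{\max}}]$ lying outside $\bigcup_j B(x_j, 3R)$ has total length less than $3\delta n$, and the bounds on $d_{\teich}(\gamma_0, \gamma_{j_{\min}})$ and $d_{\teich}(\gamma_{j_{\max}}, \gamma_n)$ already established control the remaining end pieces by $O(\delta n + R)$. Linear progress gives $\len[\gamma_0, \gamma_n] \geqslant (A-a)n - O(R)$, so combining the estimates, the $\epsilon'$-thick proportion of $[\gamma_0, \gamma_n]$ exceeds $\theta'$ for all sufficiently large $n$. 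To pass from the discrete parameter to continuous $s$, for each large $s$ I will locate a recurrence time $m$ such that $\gamma_m$ lies within \teichmuller distance $O(R)$ of $\gamma(s)$; since $\len[\gamma_0, \gamma(s)]$ is of order $s$, the thick proportions on $[\gamma_0, \gamma(s)]$ and $[\gamma_0, \gamma_m]$ differ by $O(R/s)$, which vanishes, so the conclusion transfers. Since $h(E^{(1)}_n \cup E^{(2)}_n \cup E^{(3)}_n) \to 0$ as $n \to \infty$, this yields $h(\Lambda(r, \theta', \epsilon')) \to 1$ as $r \to \infty$.

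The main obstacle is the coupled parameter choice: making $C$ small forces $D$ large, which then forces $p$ close to $1$ in order to keep $2(1+\rho-p)D < \delta$, which in turn forces $R$ large enough that $h(\Lambda_R) \geqslant p$, making $\epsilon' = \epsilon_0 e^{-6R}$ very small. This is acceptable because the proposition only requires the existence of some $\epsilon'$. A secondary technical point is the continuous-to-discrete transfer, which relies on the closest-point projections $\gamma_j$ progressing essentially monotonically along $\gamma$ for large indices on the complement of the exceptional sets; this follows from linear progress together with the fellow-travelling implicit in $R$-recurrence.
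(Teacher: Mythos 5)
Your argument is correct and follows essentially the same route as the paper: control the excursions of the tracked geodesic outside the union of balls $B(x_j,3R)$ around recurrence points via $E^{(1)}_n$ and $E^{(2)}_n$, compare against the linear-progress lower bound on $d_{\teich}(\gamma_0,\gamma_n)$ coming from $E^{(3)}_n$, and tune $\rho, c, a, D, p, R$ so that the non-thick proportion falls below $1-\theta'$. The differences are cosmetic --- you decouple the parameter choices (fixing $D$ from $\mu$ alone before choosing $p$ and then $R$, instead of the paper's $D=\min\{(1-p)^{-1/2},R\}$) and make $\epsilon'$ explicit via Wolpert's lemma --- though in the discrete-to-continuous step note that a single gap between consecutive recurrence times can be arbitrarily long, so one cannot always find a recurrence point within $O(R)$ of $\gamma(s)$; the standard fix (as in the paper) is to bracket $s$ between the two-sided linear bounds on $d_{\teich}(\gamma_0,\gamma_n)$, which costs only a multiplicative factor that can be made close to one.
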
 
	
	\begin{proof}
		Given $R > 0$ there exists $\epsilon(R) > 0$ such that $B(x_0, 3R) \subset \T_{\epsilon(R)}(S)$. 
		By equivariance, $B(x_n, 3R) \subset  \T_{\epsilon(R)}(S)$ for all $n \in \Z$. 
		
		Suppose that $\omega$ is in the complement of $E^{(1)}_n\cup E^{(2)}_n$. 
		We first prove the proposition along the discrete set of times $\gamma_n$ along $\gamma_\omega$. 
		By the triangle inequality
		\[
		d_{\teich}(\gamma_0, \gamma_n) \geqslant d_{\teich}(x_0, x_n) - d_{\teich}(x_0, \gamma_0) - d_{\teich}(x_n, \gamma_n).
		\]
		Since $\gamma_0$ is the closest point in $\gamma_\omega$ to $x_0$
		\[
		d_{\teich}(x_0, \gamma_0) \leqslant d_{\teich} (x_0, x_{j_{\min}} ) + R \leqslant ((1+\rho - p)D +C+c)n  + R .
		\]
		Similarly
		\[
		d_{\teich}(x_n, \gamma_n) \leqslant d_{\teich} (x_n, x_{j_{\max}} ) + R \leqslant  ((1+\rho - p)D +C+c)n + R .
		\]
		So we get
		\[
		d_{\teich}(\gamma_0, \gamma_n) \geqslant d_{\teich}(x_0, x_n) - 2((1+\rho - p)D +C+c)n  -2R.
		\]
		
		Further assume that $\omega$ is also in the complement of $E^{(3)}_n$.
		We deduce from the above estimates that
		\begin{equation}\label{e:lower-bound}
		d_{\teich}(\gamma_0, \gamma_n) \geqslant (A-a)n -  2((1+\rho - p)D +C+c)n  -2R.
		\end{equation}
		
		We note that the points in the segment $[\gamma_0, \gamma_n]$ that are not in $\T_{\epsilon(R)}(S)$ are in the union of the sets $[\gamma_0, \gamma_{j_{\min}}]$, $[\gamma_{j_{\max}}, \gamma_n]$ and the sets $C_{j,k}$ for all consecutive recurrence pairs $j < k$. 
		The individual upper bounds on the lengths of each set in the union gives us  the bound on the thick proportion for a choice of $\epsilon' \leqslant \epsilon(R)$
		\begin{equation}\label{e:thick-est} 
		1- \thick^{\%}_{\epsilon'} [\gamma_0, \gamma_n]  \leqslant \frac{ 4((1+\rho - p)D +C+c)}{(A-a) - 2((1+\rho - p)D +C+c)  - 2R/n}.
		\end{equation} 
		Now we make explicit choices as follows. 
		\begin{itemize} 
		\item Note that as $R \to \infty$, the proportion $p \to 1$ and hence $\min \{ (1-p)^{-1/2}, R \} \to \infty$. 
		\item Recall that if we tend $D \to \infty$ then $C \to 0$. 
		\item So if we set $D = \min \{ (1-p)^{-1/2}, R \}$ then we can pass to $R$ large enough so that $C$ is small enough. 
		\item Furthermore, note that $D(1-p) \leqslant (1-p)^{1/2}$ which can also be made small with $R$ large enough.
		\item Recall also that $\rho, c$ and $a$ can be chosen at the outset and set to be small enough.
		\end{itemize} 
		These choices imply that by choosing $R$ large enough we may arrange that the numerator and the second term in the denominator in (\ref{e:thick-est}) are small. 
		Once $R$ has been fixed the third term in the denominator goes to zero as $n \to \infty$. 
		In conclusion, by choosing $R$ large enough we may arrange the right hand side of (\ref{e:thick-est}) to be smaller than $1-\theta'$.
		
		Now we conclude the proposition as the time $s \to \infty$ along $\gamma_\omega$. 
		By an argument identical to the derivation of~(\ref{e:lower-bound}), we get the upper bound
		\[
		d_{\teich}(\gamma_0, \gamma_n) \leqslant (A+a)n +2((1+\rho - p)D +C+c)n + 2R.
		\]
		Given a time $s> 0$, we may choose $n$ to satisfy 
		\[
		(A-a)n - 2((1+\rho - p)D +C+c)n  -2R < s < (A+a)n + 2((1+\rho - p)D +C+c)n + 2R.
		\]
		When $s$ is large enough, such a choice always exists. 
		Since we are only interested in the limit as $s \to \infty$, we may make this choice. 
		Further tweaking $R$ and hence $p$ and also tweaking $\rho, c$ and $a$ we may arrange that the ratio of the upper bound to the lower bound in the above inequality is as close to one as we want. 
		This implies that as $s \to \infty$ the thick proportion of $[\gamma_0, \gamma(s)]$ is the same as the thick proportion of $[\gamma_0, \gamma_n]$.
		
		Finally, we note that the set of exceptions is the union $E^{(1)}_n \cup E^{(2)}_n\cup E^{(3)}_n$ whose measure tends to zero as $n \to \infty$. 
		In particular, this implies $h(\Lambda(r, \theta', \epsilon')) \to 1$ as $r \to \infty$, and we are done.
	\end{proof}
	
	As a direct consequence of Proposition~\ref{p:tracked-thick}, we get the following conclusion.
	
	\begin{proposition}\label{p:tracked-thick-2}
		Let $0 < \theta' < 1$. Then there exists an $\epsilon' > 0$ such that for almost every bi-infinite sample path $\omega$ there exists $t_\omega$ such that for all $t > t_\omega$ 
		\[
		\thick^{\%}_{\epsilon'} [\gamma(0), \gamma(t)] \geqslant \theta'.
		\]
	\end{proposition}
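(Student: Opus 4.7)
The plan is to deduce this statement from Proposition~\ref{p:tracked-thick} by a direct continuity-of-measure argument, with essentially no new estimates required.

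First I would fix $0 < \theta' < 1$ and choose the constant $\epsilon' > 0$ supplied by Proposition~\ref{p:tracked-thick}, so that $h(\Lambda(r, \theta', \epsilon')) \to 1$ as $r \to \infty$. The key structural observation is that the family $\{\Lambda(r, \theta', \epsilon')\}_{r > 0}$ is monotone increasing in $r$: if $r_1 < r_2$ and the inequality $\thick^{\%}_{\epsilon'}[\gamma(0), \gamma(s)] \geqslant \theta'$ holds for every $s > r_1$, then in particular it holds for every $s > r_2$, so $\Lambda(r_1, \theta', \epsilon') \subseteq \Lambda(r_2, \theta', \epsilon')$. Thus the union
\[
\Lambda(\theta', \epsilon') := \bigcup_{r > 0} \Lambda(r, \theta', \epsilon')
\]
is exactly the set of bi-infinite sample paths $\omega$ for which there exists some $t_\omega > 0$ such that $\thick^{\%}_{\epsilon'}[\gamma(0), \gamma(t)] \geqslant \theta'$ for all $t > t_\omega$.

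Next I would invoke continuity of the measure $h$ from below along the increasing union:
\[
h(\Lambda(\theta', \epsilon')) = \lim_{r \to \infty} h(\Lambda(r, \theta', \epsilon')) = 1,
\]
where the last equality is precisely the conclusion of Proposition~\ref{p:tracked-thick}. This gives that $h$-almost every $\omega$ lies in $\Lambda(\theta', \epsilon')$, which is the statement of the proposition.

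There is no real obstacle here; the only thing to be careful about is the direction of monotonicity of $r \mapsto \Lambda(r, \theta', \epsilon')$, which as noted above goes the ``right'' way so that continuity of measure from below applies and the full-measure set is automatically a countable union (one can replace the union over $r > 0$ with the union over $r \in \mathbb{N}$ by monotonicity). The constant $t_\omega$ produced is not uniform in $\omega$, but the statement does not require uniformity, so we are done.
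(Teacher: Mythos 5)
Your argument is correct and is exactly the routine continuity-of-measure deduction the paper has in mind when it states Proposition~\ref{p:tracked-thick-2} as a direct consequence of Proposition~\ref{p:tracked-thick}: the sets $\Lambda(r,\theta',\epsilon')$ increase with $r$, so their union has full measure and consists precisely of the paths admitting a threshold $t_\omega$. Nothing is missing.
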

	
	\subsection{Thickness along rays} 
	Now let $y$ be some other base-point in $\T(S)$ possibly distinct from the base-point $x_0$ for the random walk.
	Masur proved that \teichmuller rays with the same vertical foliation are asymptotic if the foliation is uniquely ergodic. 
	See~\cite[Theorem 2]{Mas}. 
	We now use this result to transfer the thickness estimates from tracked geodesics to corresponding rays from $y$. 
	Suppose that $\omega$ is a typical bi-infinite sample path with the tracked geodesic $\gamma_\omega$. 
	Let $\lambda^+_\omega$ be the projective measured foliation that $\gamma_\omega$ converges to in the forward direction.
	Let $\xi_\omega$ be the geodesic ray from $y$ that converges to $\lambda^+_\omega$. 
	We may parameterise $\xi_\omega$ with unit speed so that $\xi_\omega(0) =y$ and $\xi_\omega(t) \to \lambda^+_\omega$ as $t \to \infty$. 
	
	\begin{proposition}\label{p:thick-ray}
		Let $0 < \theta < 1$. Then there exists $\epsilon > 0$ such that for almost every bi-infinite sample path $\omega$ there is a time $T_\omega > 0$ such that 
		\[
		\thick^{\%}_{\epsilon} [\xi_\omega (0), \xi_\omega (t)] \geqslant \theta
		\]
		for all $t  \geqslant T_\omega$. 
	\end{proposition}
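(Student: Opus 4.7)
The plan is to transfer the thickness conclusion of Proposition~\ref{p:tracked-thick-2} from the bi-infinite tracked geodesic $\gamma_\omega$ to the ray $\xi_\omega$ issuing from $y$ by exploiting the asymptoticity of \teichmuller rays with the same uniquely ergodic vertical foliation. For $h$-almost every sample path $\omega$ the projective class $\lambda^+_\omega$ is uniquely ergodic by Kaimanovich-Masur, so \cite[Theorem 2]{Mas} provides a constant $c=c_\omega \in \R$ with $d_{\teich}(\gamma_\omega(t+c), \xi_\omega(t)) \to 0$ as $t \to \infty$.

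The next step is to quantify how $\epsilon$-thickness behaves under small perturbations in $d_{\teich}$. By Wolpert's lemma, if $d_{\teich}(x,x') \leqslant \delta$, then the hyperbolic length of any simple closed geodesic evaluated on $x$ and on $x'$ differ by a factor of at most $e^{2\delta}$. Consequently, if $x \in \T_{\epsilon'}(S)$ then $x' \in \T_{\epsilon' e^{-2\delta}}(S)$.

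Given $0 < \theta < 1$, I would choose $\theta' \in (\theta, 1)$ and apply Proposition~\ref{p:tracked-thick-2} to obtain $\epsilon' > 0$ such that for $h$-almost every $\omega$ there is $t_\omega$ with $\thick^{\%}_{\epsilon'}[\gamma_\omega(0), \gamma_\omega(t)] \geqslant \theta'$ for all $t \geqslant t_\omega$. Fix $\delta > 0$ small, set $\epsilon = \epsilon' e^{-2\delta}$, and use asymptoticity to pick $T_1 = T_1(\omega)$ so that $d_{\teich}(\gamma_\omega(t+c), \xi_\omega(t)) < \delta$ whenever $t \geqslant T_1$. By the perturbation estimate, the pull-back under $t \mapsto t + c$ of the $\epsilon'$-thick portion of $[\gamma_\omega(T_1+c), \gamma_\omega(t+c)]$ lies inside the $\epsilon$-thick portion of $[\xi_\omega(T_1), \xi_\omega(t)]$, so the two subsegments have comparable thick proportions at the respective scales.

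The final step is a direct bookkeeping: discarding the initial pieces $[\gamma_\omega(0), \gamma_\omega(T_1+c)]$ and $[\xi_\omega(0), \xi_\omega(T_1)]$ distorts the thick proportion by at most $(T_1 + |c|)/t$, which tends to $0$ as $t \to \infty$. Choosing $\theta'$ sufficiently close to $1$ (so that $\theta' - (T_1+|c|)/t \geqslant \theta$ for large $t$) and defining $T_\omega$ accordingly yields $\thick^{\%}_{\epsilon}[\xi_\omega(0), \xi_\omega(t)] \geqslant \theta$ for all $t \geqslant T_\omega$. The only subtle point is that the reparametrization constant $c_\omega$ and the transient time $T_1(\omega)$ are themselves random, but this poses no difficulty since both are almost surely finite and the final statement is pointwise almost sure; no uniformity in $\omega$ is required.
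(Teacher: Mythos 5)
Your proposal is correct and follows essentially the same route as the paper: invoke Kaimanovich--Masur for unique ergodicity, Masur's asymptoticity of rays with the same uniquely ergodic vertical foliation, Proposition~\ref{p:tracked-thick-2} with some $\theta'>\theta$, stability of $\epsilon$-thickness under small \teichmuller perturbations (the paper phrases this as choosing $\epsilon$ so that a neighbourhood of $\T_{\epsilon'}(S)$ lies in $\T_\epsilon(S)$, which is your Wolpert-type estimate), and then absorb the transient initial segment into the limit $t\to\infty$. The only cosmetic difference is that you make the reparametrisation constant $c_\omega$ explicit; also note that any fixed $\theta'>\theta$ suffices---there is no need to take $\theta'$ close to $1$, since the bookkeeping error vanishes as $t\to\infty$.
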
 
	
	\begin{proof}
		By \cite[Theorem 2.2.4]{Kai-Mas1}, for almost every $\omega$ the foliation $\lambda^+_\omega$ is uniquely ergodic.  
		By Masur's theorem, there is a time $s > 0$ that depends only on $d_{\teich}(\gamma_\omega, y)$ such that for all $t \geqslant s$ we have $d_{\teich}(\xi_\omega(t), \gamma_\omega) < 1/2$. 
		We may choose $\epsilon \leqslant \epsilon'$ such that the 1-neighbourhood of $\T_{\epsilon'}(S)$ is contained in $\T_\epsilon(S)$. 
		This means after the time $s$ along $\xi_\omega$ any $\epsilon'$-thick segment of $\gamma_\omega$ gives an $\epsilon$-thick segment of $\xi_\omega$ of at least the same length.
		We now set $\theta' > \theta$ and use Proposition~\ref{p:tracked-thick-2}. 
		Let $t > s$. 
		The total length of $\epsilon'$-thick segments of $\gamma_\omega$ that are inside a 1-neighbourhood of $[\xi_\omega(s), \xi_\omega(t)]$ is at least $\theta' t - s - d_{\teich}(\gamma(0), y)$.
		If $t$ is large enough then $\theta' t - s - d_{\teich}(\gamma(0), y) > \theta t$, and we are done. 
	\end{proof} 
	
	As an immediate corollary, we get
	\begin{corollary}
		The measures $\nu_r$ on spheres arising from the harmonic measure satisfy the thickness property~\ref{d:thick}.
	\end{corollary}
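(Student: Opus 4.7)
The plan is to reduce the corollary directly to Proposition~\ref{p:thick-ray}, since it already provides the required thickness estimate along generic rays from the base-point. Fix $x \in \T(S)$ and constants $\theta, \eta \in (0,1)$. Apply Proposition~\ref{p:thick-ray} with $y = x$ and this $\theta$ to obtain an $\epsilon > 0$ such that for $h$-almost every bi-infinite sample path $\omega$ there exists $T_\omega > 0$ with $\thick^{\%}_{\epsilon}[\xi_\omega(0), \xi_\omega(t)] \geqslant \theta$ for all $t \geqslant T_\omega$. Since $T_\omega$ is a.s. finite, the nested sets
\[
A_r := \{\omega : T_\omega \leqslant \eta r \}
\]
satisfy $h(A_r) \to 1$ as $r \to \infty$. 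For $\omega \in A_r$ and any $t \in [\eta r, r]$, one has $t \geqslant \eta r \geqslant T_\omega$, so the inequality $\thick^{\%}_\epsilon[\xi_\omega(0), \xi_\omega(t)] \geqslant \theta$ holds for the whole relevant range of $t$ simultaneously.

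Next I would transfer this from bi-infinite paths to a statement about $\nu_r$ on $S_r(x)$. The event defining $A_r$ depends on $\omega$ only through the ray $\xi_\omega$, and by Kaimanovich--Masur the ray $\xi_\omega$ depends on $\omega$ only through the projective class $\lambda^+_\omega \in \pmf(S)$, which is uniquely ergodic for a.e.\ $\omega$ and hence determines a unique Teichm\"uller ray from $x$ by Masur's theorem. Consequently $A_r$ descends to a subset $A'_r \subset \pmf(S)$ with $\nu(A'_r) \to 1$. Via the identification of uniquely ergodic foliations with their corresponding unit-area quadratic differentials in $\mathcal{Q}^1(x)$, and the map $q \mapsto \phi_r q$ to the sphere $S_r(x)$ (the very construction of $\nu_r$), the set $A'_r$ pushes to a subset $B_r \subset S_r(x)$ with $\nu_r(B_r) \to 1$, and every $x' \in B_r$ satisfies the condition in Definition~\ref{d:thick}.

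I do not anticipate any real obstacle: once Proposition~\ref{p:thick-ray} is granted, the corollary amounts to (i) replacing the a.s.\ finite stopping time $T_\omega$ by the deterministic threshold $\eta r$ at the cost of a set of asymptotically full measure, and (ii) carrying the statement along the tower $\Mod(S)^\Z \to \pmf(S) \to \mathcal{Q}^1(x) \to S_r(x)$ of measure-theoretic identifications built into the definition of $\nu_r$. The only minor point to confirm is the measurability of $\omega \mapsto T_\omega$, which follows from the measurability of the functions $\omega \mapsto \thick^{\%}_\epsilon[\xi_\omega(0), \xi_\omega(t)]$ used in Proposition~\ref{p:thick-ray}.
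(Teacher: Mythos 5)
Your proposal is correct and is exactly the argument the paper intends: the corollary is stated as "immediate" from Proposition~\ref{p:thick-ray}, and your two steps --- replacing the almost surely finite threshold $T_\omega$ by the deterministic cutoff $\eta r$ via $h(\{T_\omega \leqslant \eta r\}) \to 1$, and transporting the statement through the identifications $\Mod(S)^{\Z} \to \pmf(S) \to \mathcal{Q}^1(x) \to S_r(x)$ used to define $\nu_r$ --- are precisely the details being suppressed. No gap.
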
 
	
	\subsection{Separation properties}\label{s:separate}

	To prove the separation properties, we project to the complex of curves. 
	Under this projection, \teichmuller geodesics give unparameterised quasi-geodesics and thick segments make coarse linear progress. 
	If a pair of \teichmuller rays with good thickness properties up to a distance $r$ are not well-separated then their projections to the curve complex fellow travel.
	Hence, the endpoints of their projections lie in some shadow and this shadow is pushed further and further out as $r \to \infty$. 
	So roughly speaking, we may conclude the separation properties from knowing that the harmonic measure of these shadows goes to zero.
	We now give the details of the argument. 

	The complex of curves $\cC(S)$ is a graph whose vertices are isotopy classes of essential simple closed curves on $S$. 
	Two vertices $\alpha$ and $\beta$ have an edge between them if $\alpha$ and $\beta$ have representatives that are disjoint. 
	The curve complex $\cC(S)$ is a locally infinite graph with infinite diameter. 
	Masur-Minsky showed that $\cC(S)$ is in fact $\delta$-hyperbolic. See \cite[Theorem 1.1]{Mas-Min}.
	Recall that the Gromov product on $\cC(S)$ is defined as follows: given a base-point $x$ and two points $y, z$ in the space, the Gromov product of $y$ and $z$ based at $x$ is defined as 
	\[
	(y,z)_x = \frac{1}{2} \left( d_{\cC} (x,y) + d_{\cC}(x,z) - d_{\cC}(y,z) \right)
	\]
	where $d_{\cC}$ denotes the distance in the curve complex.
	Let $\partial \cC(S)$ be the Gromov boundary of $\cC(S)$. 
	Given a number $\tau > 0$, the shadow of $y$ from $x$ with distance parameter $d_{\cC}(x,y) - \tau$ is defined as 
	\[
	\shad_x (y, \tau) = \{ z \in \cC(S) \cup \partial \cC(S) \text{ such that } (y,z)_x \geqslant d_{\cC}(x,y) - \tau \}.
	\]
	There are different definitions of shadows in the literature and the one we use here is from Maher-Tiozzo. See \cite[Page 197]{Mah-Tio}.
	
	To every marked hyperbolic surface $x\in\T(S)$, one can consider a systole on $x$, that is, a shortest closed hyperbolic geodesic on $x$. 
	A systole is always a simple closed curve and hence can be thought of as a vertex in the curve complex $\cC(S)$.
	This defines a coarse projection $\sys: \T(S) \to \cC(S)$ from \teichmuller space to the curve complex.
	By \cite[Theorems 2.3 and 2.6]{Mas-Min}, the projection $\sys(\gamma)$ of a \teichmuller geodesic $\gamma$ is an unparameterised quasi-geodesic in $\cC(S)$ with uniform constants that depend only on the surface.
	
	Let $M > 0$. 
	Suppose $\xi_\omega$ and $\xi_\eta$ are geodesic rays from $y$ chosen with respect to the harmonic measure, where $\omega$ and $\eta$ are the associated bi-infinite sample paths. 
	Let $T_\omega$ and $T_\eta$ be the thresholds given by Proposition ~\ref{p:thick-ray} for $\omega$ and $\eta$, respectively. 
	Pick $T$ larger than $T_\omega$ and $T_\eta$. 
	Suppose that $d_{\teich}(\xi_\omega (T), \xi_\eta (T)) < M$.
	Since the projection by systoles is coarsely Lipschitz, we may continue to assume $d_{\cC}(\sys(\xi_\omega(T)), \sys(\xi_\eta(T))) < M$. 
	
	We consider the projections $\sys([y, \xi_\omega(T)])$ and $\sys([y, \xi_\eta(T)])$. 
	Since $\xi_\omega$ and $\xi_\eta$ spend at least $\theta$ proportion of their time in the thick part their projections to $\cC(S)$ make linear progress, that is there exists a constant $\kappa > 0$ such that
	$d_\cC(\sys(y), \sys(\xi_\omega(T))) \geqslant \kappa T$ and $d_{\cC} (\sys(y), \sys(\xi_\eta(T))) \geqslant \kappa T.$
	
	Denote by $\lambda^+_\eta$ the limiting point of $\sys(\xi_\eta)$. By hyperbolicity of $\cC(S)$ there is a constant $r > 0$ such that the Gromov product between $\lambda^+_\eta$ and $\sys(\xi_\omega(T))$ satisfies
	\[
	(\sys(\xi_\omega(T)),\lambda^+_\eta)_{\sys(y)}\geqslant d_{\cC}(\sys(y),\sys(\xi_\omega(T)))- M - r. 
	\]
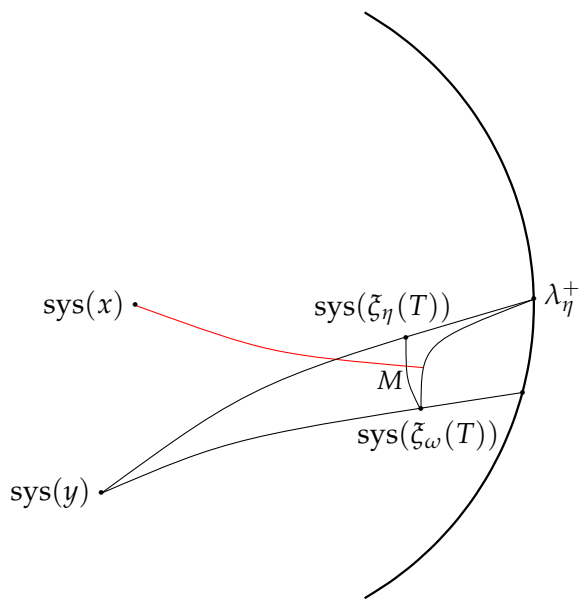
\begin{figure}[H]
	\begin{center}

		\begin{tikzpicture}
		\node at (-1.25,-2.5){$\boldsymbol{\cdot}$};
		\node[left] at (-1.25,-2.5){$\sys(y)$};
		\node at (3,-1.38){$\boldsymbol{\cdot}$};
		\node[below] at (3.1,-1.4){$\sys(\xi_\omega(T))$};
		\node at (2.8,-0.43){$\boldsymbol{\cdot}$};
		\node[above] at (2.5,-0.4){$\sys(\xi_\eta(T))$};
		\draw [red] (-0.8,-0) .. controls(1,-0.65) .. (3.04,-0.83);
		\node at (-0.8,0){$\boldsymbol{\cdot}$};
		\node[left] at (-0.8,0){$\sys(x)$};
		\draw (-1.25,-2.5) .. controls(0.5,-1.75) .. (4.35,-1.16);
		\draw (-1.25,-2.5) .. controls(0.8,-1) .. (4.5,0.08);
		\draw (3,-1.4) .. controls(2.8,-1) .. (2.8,-0.45);
		\node[left] at (2.9,-1){\small${M}$};
		\draw  (3,-1.4) .. controls(3,-0.5) .. (4.5,0.08);
		\draw [line width = 0.3mm,domain=-60:60] plot [smooth] ({4.5*cos(\x)}, {4.5*sin(\x)});
		\node at (4.35,-1.16){$\boldsymbol{\cdot}$};
		\node at (4.5,0.08){$\boldsymbol{\cdot}$};
		\node[right] at (4.5,0.08){$\lambda^+_\eta$};
		\end{tikzpicture}
	\end{center}
	\caption{The Gromov product $(\sys(\xi_\omega(T)),\lambda^+_\eta)_{\sys(x)}$ in $\cC(S)$, up to an additive constant.}
	\label{f:shadows}
\end{figure}

	In order to estimate harmonic measures, we will now pass to $\sys(x)$ as the base-point for Gromov products. 
	By the triangle inequality 
	\[
	d_{\cC}(\sys(x), \sys(\xi_\omega(T)) ) \geqslant d_\cC(\sys(y), \sys(\xi_\omega(T))) - d_{\cC}(\sys(x), \sys(y)).
	\]
	Note that $d_{\cC}(\sys(x), \sys(\xi_\omega(T)))$ goes to infinity as $d_\cC(\sys(y), \sys(\xi_\omega(T)))$ does. We have $(\sys(\xi_\omega(T),\lambda^+_\eta)_{\sys(x)}\geqslant d_{\cC} (\sys(x),\sys( \xi_\omega(T)))-r'$, where $r' = M + r  + 2 d_{\cC} (\sys(x), \sys(y))$. 
	Then $\lambda^+_\eta$ is contained in $\shad_{\sys(x)}(\sys( \xi_\omega(T)), r')$.
	By \cite[Proposition 5.1]{Mah-Tio}, the supremum of the harmonic measure of $\shad_{\sys(x)}(\sys( \xi_\omega(T)),r')$ tends to zero as $T \to \infty$. 
	
	As an immediate corollary we get
	\begin{corollary}\label{c:separate}
		The measures $\nu_r$ on spheres arising from the harmonic measure satisfy the separation property~\ref{d:separate}.
	\end{corollary}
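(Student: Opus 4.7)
The plan is to formalise the sketch given in the paragraphs preceding the corollary. To avoid a notation collision, let me write $\eta_0$ for the parameter appearing in Definition~\ref{d:separate}, reserving $\eta$ for a bi-infinite sample path. Fix $M > 0$ and $0 < \eta_0 < 1$. The goal is to show that the $\nu \times \nu$-measure of pairs $(\omega, \eta)$ such that $d_{\teich}(\xi_\omega(t), \xi_\eta(t)) < M$ for some $t \in [\eta_0 r, r]$ tends to zero as $r \to \infty$.

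First, for any $\delta > 0$, I would apply Proposition~\ref{p:thick-ray} with thickness proportion $\theta$ chosen close to $1$ to produce $\epsilon > 0$, a common threshold $T_0$ and a set $\Omega_0$ of sample paths with $h(\Omega_0) \geqslant 1 - \delta$ on which $\thick^{\%}_\epsilon[\xi_\omega(0), \xi_\omega(t)] \geqslant \theta$ for all $t \geqslant T_0$. Since the complement of $\Omega_0 \times \Omega_0$ in $h \times h$ has mass at most $1 - (1-\delta)^2$, it suffices to control bad pairs $(\omega, \eta) \in \Omega_0 \times \Omega_0$. Next, take $r$ large enough that $\eta_0 r > T_0$, and consider a pair $(\omega, \eta) \in \Omega_0 \times \Omega_0$ for which separation fails at some $t_0 \in [\eta_0 r, r]$. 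The argument preceding the corollary applies verbatim at this $t_0$ and places $\lambda^+_\eta \in \shad_{\sys(x)}(\sys(\xi_\omega(t_0)), r')$ for a constant $r'$ depending only on $M$, the hyperbolicity constant of $\cC(S)$, and $d_\cC(\sys(x), \sys(y))$.

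To eliminate the existential quantifier over $t_0 \in [\eta_0 r, r]$, I would exploit that $\sys \circ \xi_\omega$ is an unparameterised quasi-geodesic making linear progress in $\cC(S)$ past $T_0$ (with some positive rate $\kappa$ coming from the thickness), hence $d_\cC(\sys(x), \sys(\xi_\omega(t))) \geqslant \kappa t - O(1)$ for $t \geqslant T_0$. By $\delta$-hyperbolicity of $\cC(S)$, shadows based at points that lie within bounded Hausdorff distance of a single quasi-geodesic ray nest, so the union $\bigcup_{t \in [\eta_0 r, r]} \shad_{\sys(x)}(\sys(\xi_\omega(t)), r')$ is contained in a single shadow $\shad_{\sys(x)}(\sys(\xi_\omega(\eta_0 r)), r'')$ with $r''$ enlarged by a uniform constant. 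Consequently, for each fixed $\omega \in \Omega_0$, the set of bad $\eta \in \Omega_0$ is contained in the $\lambda^+$-preimage of a single shadow based at a point at curve-complex distance at least $\kappa \eta_0 r - O(1)$ from $\sys(x)$.

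Finally, by \cite[Proposition 5.1]{Mah-Tio} the harmonic $\nu$-measure of $\shad_{\sys(x)}(z, r'')$ tends to zero uniformly in $z$ as $d_\cC(\sys(x), z) \to \infty$. Applied with $z = \sys(\xi_\omega(\eta_0 r))$, this shows that for each $\omega \in \Omega_0$ the $\nu$-mass of bad $\eta$ tends to zero as $r \to \infty$, uniformly in $\omega$. Integrating over $\omega$ via Fubini and then letting $\delta \to 0$ verifies Definition~\ref{d:separate}. The main obstacle is the uniformity step: passing from the one-time shadow estimate of Section~\ref{s:separate} to a bound valid simultaneously for all $t \in [\eta_0 r, r]$, which requires combining the quasi-geodesicity of $\sys \circ \xi_\omega$, the uniform thickness on $\Omega_0$, and the nesting of shadows in the hyperbolic curve complex. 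Once this reduction is in place, the rest is a direct application of the Maher--Tiozzo shadow decay and Fubini's theorem.
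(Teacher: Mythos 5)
Your proposal follows essentially the same route as the paper: the argument preceding the corollary (thickness of typical rays, linear progress of the systole projection in $\cC(S)$, the Gromov product estimate placing $\lambda^+_\eta$ in a shadow, and the Maher--Tiozzo decay of harmonic measure of shadows) is exactly what the paper invokes, and the corollary is stated there as an immediate consequence. The only addition in your write-up is making explicit the uniformity over $t \in [\eta_0 r, r]$ via nesting of shadows along the quasi-geodesic, a detail the paper leaves implicit; your treatment of it is correct.
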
 
		
	For exceptional moduli or more generally for lattices in $\isom(\H^n)$, the ambient geometry is hyperbolic.
	So we may directly use shadows/ half-spaces in $\H^n$. 
	As above, the harmonic measure of shadows decays to zero as the distance from the base-point goes to infinity. 
	So the proof can be carried out exactly as above to conclude the separation property.

	\section{Statistical Hyperbolicity}\label{s:stat-hyp}  
	For the sake of completeness, we now sketch the spherical version of the argument given by Dowdall-Duchin-Masur~\cite[Theorem 7.1]{Dow-Duc-Mas} of how the thickness and separation properties imply statistical hyperbolicity. The idea is to mimic a proof of the fact that $E(\H^n)=2$ for the natural measures on spheres which makes use of the $\delta$-hyperbolicity of $\H^n$, and of the speed of separation of geodesics. 
	
	However, as discussed above, \teichmuller space is neither $\delta$-hyperbolic, nor negatively-curved in the sense of Busemann. The motivation for the separation property~\ref{d:separate} is to show instead that most pairs of geodesics after some threshold time become separated by a definite amount. This replaces the use of the negative curvature of $\H^n$. The combination of the following theorem of Dowdall-Duchin-Masur~\cite[Theorem A]{Dow-Duc-Mas} and the thickness property~\ref{d:thick} then replaces the use of the $\delta$-hyperbolicity of $\H^n$.
	
	\begin{theorem}[~\cite{Dow-Duc-Mas}, Theorem A]\label{t:theorem-A}
		For any $\epsilon>0$ and any $0<\theta'\leqslant1$, there exist constants $C$ and $L$ such that for any geodesic sub-interval $I\subset[x,x']\subset\T(S)$ of length at least $L$ and spending at least $\theta$ proportion of its time in $\T_\epsilon(S)$, we have
		\[I\cap\Nbhd_C([x,x'']\cup[x',x''])\neq\varnothing,\]
		for all $x''\in\T(S)$.
	\end{theorem}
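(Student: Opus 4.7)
The plan is to reduce this ``thin triangle along thick segments'' statement in $\T(S)$ to the genuine $\delta$-hyperbolicity of the curve complex $\mathcal{C}(S)$ via the systole projection $\sys\colon\T(S)\to\mathcal{C}(S)$. The underlying principle is that although $\T(S)$ with the \teichmuller metric is not Gromov hyperbolic, its restriction to the thick part behaves as if it were: thick \teichmuller segments project to definite-speed quasi-geodesics in $\mathcal{C}(S)$ and so inherit a thin-triangle property from $\mathcal{C}(S)$.

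In detail, I would first apply \cite{Mas-Min} to see that each side of the triangle with vertices $x,x',x''$ projects under $\sys$ to an unparameterised quasi-geodesic in $\mathcal{C}(S)$ with constants depending only on $S$; together these form a quasi-geodesic triangle. Second, I would establish that thickness forces linear progress in $\mathcal{C}(S)$: there exist $\kappa=\kappa(\epsilon)>0$ and $B$ such that any $\epsilon$-thick \teichmuller subsegment $J$ satisfies $\operatorname{diam}_{\mathcal{C}}(\sys(J))\geqslant \kappa|J|-B$, using cocompactness of the $\Mod(S)$-action on $\T_\epsilon(S)$ together with the Masur-Minsky contraction estimates. Applied to $I$, this gives $\operatorname{diam}_{\mathcal{C}}(\sys(I))\geqslant\kappa\theta' L-B$, which can be made arbitrarily large by choosing $L$ large. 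Third, I would invoke $\delta$-hyperbolicity of $\mathcal{C}(S)$: in a quasi-geodesic triangle the middle portion of any sufficiently long side lies within a uniform curve-complex distance $D$ of the union of the other two sides. This produces a point $p\in I$ with $d_{\mathcal{C}}(\sys(p),\sys(q))\leqslant D$ for some $q\in[x,x'']\cup[x',x'']$.

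The main obstacle is the final step: lifting this curve-complex proximity back to a \teichmuller bound. The systole projection is far from a global quasi-isometry, so $d_{\mathcal{C}}(\sys(p),\sys(q))\leqslant D$ does not by itself yield $d_{\teich}(p,q)\leqslant C$, since $q$ may lie in a very thin region through which the other two sides pass. The remedy is to exploit that $p$ itself lies in $\T_\epsilon(S)$, which by Mumford compactness has compact image in the moduli space, and then to carry out an analysis of Rafi type on the active intervals of short curves along $[x,x'']$ and $[x',x'']$ near the witness vertex $\sys(p)$. Any curve $\alpha$ that is short along one of those sides has a definite active interval, and the combinatorial machinery controls how the geodesic can ``park'' in the thin region determined by $\alpha$; when the relevant active intervals do not cover the projection window near $\sys(p)$, one obtains a thick point on $[x,x'']\cup[x',x'']$ whose systole projection is close to $\sys(p)$, at which point thick-cocompactness delivers $d_{\teich}(p,q)\leqslant C$. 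Organising this case analysis so that the constant $C$ depends only on $\epsilon$ and $\theta'$ (and not on the position of $x''$) is where the technical heart of the argument concentrates.
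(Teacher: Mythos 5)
First, a point of order: the paper does not prove this statement. It is imported verbatim as Theorem A of Dowdall--Duchin--Masur \cite{Dow-Duc-Mas} and used as a black box in Section~\ref{s:stat-hyp}, so there is no internal proof to compare yours against. Judged on its own terms, your outline has a genuine gap, and it is exactly the one you flag yourself. Steps one through three are standard and fine: $\sys$ sends \teichmuller geodesics to unparameterised quasi-geodesics by \cite{Mas-Min}, $\epsilon$-thick segments make definite progress in $\cC(S)$ (this is essentially what the paper uses in Section~\ref{s:separate}), and hyperbolicity of $\cC(S)$ then yields a point $p\in I$ with $d_{\cC}(\sys(p),\sys(q))\leqslant D$ for some $q$ on the union of the other two sides. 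But the return trip from $\cC(S)$ to $\T(S)$ is not a ``remedy to organise''; it is the entire content of the theorem, and your sketch of it does not close. Bounded curve-complex distance between $\sys(p)$ and $\sys(q)$ gives no bound on $d_{\teich}(p,q)$, and the witness $q$ produced by hyperbolicity of $\cC(S)$ may lie in --- indeed the whole relevant window of $[x,x'']\cup[x',x'']$ may lie in --- a thin region where $\sys$ collapses unbounded \teichmuller distances. Your case analysis only addresses the case ``when the relevant active intervals do not cover the projection window''; the complementary case, where the other two sides park in a product region whose subsurface coefficients are enormous while their curve-complex shadow stays near $\sys(p)$, is precisely the dangerous one and is left untouched. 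Until that case is handled with a quantitative statement, the constant $C$ has not been produced.

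For comparison, the argument in \cite{Dow-Duc-Mas} does not route through $\cC(S)$ and back. It works directly in the \teichmuller metric with the contraction property of thick geodesic segments: a sufficiently long $\epsilon$-thick subsegment of a \teichmuller geodesic is strongly contracting (Minsky, Rafi), meaning nearest-point projection onto it sends balls disjoint from it to sets of uniformly bounded diameter, and the standard Morse-type argument for contracting segments then forces any geodesic triangle containing $I$ in one side to have its other two sides enter a uniform neighbourhood of $I$. That route obtains the \teichmuller-metric constant $C$ directly and never has to invert $\sys$. If you want to keep your curve-complex framing, the honest fix is to replace your fourth step by an appeal to such a contraction or stability statement for thick segments, at which point the first three steps become largely redundant.
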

	
	We now sketch the proof of statistical hyperbolicity using Theorem~\ref{t:theorem-A} with the thickness and separation properties, namely Definition~\ref{d:thick} and Definition~\ref{d:separate}. 
	
	Let $x\in\T(S)$. We choose $\theta$ large enough, say $\theta=3/4$, and then for any $0<p<\eta<1/3$ we let $\epsilon=\epsilon(\theta,\eta)>0$ be that guaranteed by the thickness property~\ref{d:thick}. Now choose $\theta'<\theta$, say $\theta'=1/2$, and let $C$ and $L$ be the constants given by Theorem~\ref{t:theorem-A} for our choice of $\theta'$ and $\epsilon$. The thickness and separation properties then imply that for all $r$ large enough, there exists a subset $P_r\subset S_{r}(x)\times S_{r}(x)$ whose complement has $\nu_r\times \nu_r$-measure at most $p$ and is such that for all $(x',x'')\in P_r$ we have $d_{\teich}(x'_t,x''_t)\geqslant 3C$, and
	\[\thick_\epsilon^{\%}([x,x'_t]),\thick_\epsilon^{\%}([x,x''_t])\geqslant\theta,\]
	for all $\eta r\leqslant t \leqslant r$. It can be checked that $B(x'_t,C)\cap[x,x'']=\varnothing$ for all such points $x'_t$.
	
	Choosing $r$ large enough, we can arrange that the interval $I_r=[x'_{\eta r},x'_{2\eta r}]$ spends at least $\theta'$ proportion of its time in $\T_\epsilon(S)$, has length at least $L$ and, since $\eta<1/3$, is contained in $[x,x']$. By applying Theorem~\ref{t:theorem-A}, we must have that $I_r\cap\Nbhd_C([x,x'']\cup[x',x''])\neq\varnothing$ and, since we have already noted that $I_r\cap\Nbhd_C([x,x''])=\varnothing$, it then follows that there exists a point in $[x',x'']$ at distance at most $2\eta r+C$ from $x$. Hence we have that
	\[d_{\teich}(x',x'')\geqslant (2-4\eta)r-2C,\]
	for all $(x',x'')\in P_r$. From which it follows that
	\begin{align*}
	E(X)&:=\lim_{r\to\infty}\frac{1}{r}\int_{S_r(x)\times S_r(x)}d_{\teich}(x',x'')\,\,d\nu_r(x')\,d\nu_r(x'') \\
	&\geqslant\liminf_{r\to\infty}\frac{1}{r}\int_{S_r(x)\times S_r(x)}d_{\teich}(x',x'')\,\,d\nu_r(x')\,d\nu_r(x'') \\
	&\geqslant\liminf_{r\to\infty}\frac{1}{r}(1-p)((2-4\eta)r-2C) \\
	&=(1-p)(2-4\eta).
	\end{align*}
	Hence, the result follows as $p$ and $\eta$ can be taken to be arbitrarily small.
	
	\subsection{Proof of Theorem~\ref{t:hyperbolic}}
	We now give a quick proof of Theorem~\ref{t:hyperbolic}. 
	The ambient geometry in $\H^n$ is already hyperbolic.
	So we can simply bypass the thickness discussion and directly invoke the separation property for the harmonic measure discussed at the end of Section \ref{s:separate}. 
	The proof of statistical hyperbolicity then follows the one in Section \ref{s:stat-hyp} and is simpler.
	
	\section*{Funding}
	
	This work was supported by the Engineering and Physical Sciences Research Council [EPSRC DTG EP/N509668/1 M\&S to L.J.] 
	
	\subsection{Acknowledgements:} 
	We thank Maxime Fortier-Bourque for conversations that clarified many points in this work. We also thank the referees for detailed comments that helped the paper.


\begin{thebibliography}{00}
		
		\bibitem{Bla-Hai-Mat} Blach\`{e}re, S., Ha\"{i}ssinsky, P. and Mathieu, P. {\em Harmonic measures versus quasi-conformal measures for hyperbolic groups.} Ann. Sci. \'{E}c. Norm. Sup\'{e}r (4) 44 (2011), no. 4, 683-721.
		
		\bibitem{Der-Kle-Nav} Deroin, B., Kleptsyn, V. and Navas, A. {\em On the question of ergodicity of minimal group actions on the circle.} Mosc. Math. J. 9 (2009), no. 2, 263-303.
		
		\bibitem{Dow-Duc-Mas} Dowdall, S., Duchin, M. and Masur, H. {\em Statistical hyperbolicity in \teichmuller space.} Geom. Funct. Anal. 24 (2014), no. 3, 748-795.
		
		\bibitem{Duc-Lel-Moo} Duchin, M., Leli{\`e}vre, S. and Mooney, C. {\em Statistical hyperbolicity in groups}, Algebr. Geom. Topol. 12 (2012), 1--18.
		
		\bibitem{Esk-Mir-Raf} Eskin, A., Mirzakhani, M. and Rafi, K. {\em Absolutely continuous stationary measures for the mapping class group.} Bistro seminar by Rafi, 23 April 2020.
		
		\bibitem{Far-Mar} Farb, B. and Margalit, D. {\em A primer on mapping class groups.} Princeton Mathematical Series, 49, Princeton University Press, Princeton NJ (2012).  
		
		\bibitem{Gad} Gadre, V. {\em Harmonic measures for distributions with finite support on the mapping class group are singular.} Duke Math. J. 163 (2014), no. 2, 309-368.
		
		\bibitem{Gad-Mah-Tio2} Gadre, V., Maher, J. and Tiozzo, G. {\em Word length statistics and Lyapunov exponents for Fuchsian groups with cusps.} New York J. Math. 21 (2015), 511-531.
		
		\bibitem{Gad-Mah-Tio} Gadre, V., Maher, J. and Tiozzo, G. {\em Word length statistics for \teichmuller geodesics and singularity of harmonic measure.} Comment. Math. Helv. 92 (2017), no. 1, 1-36.
		
		\bibitem{Gui-LeJ} Guivarc'h, Y. and LeJan, Y. {\em Sur l'enroulement du flot g\'{e}od\'{e}sique.} C. R. Acad. Sci. Paris. S\'{e}r. I Math. 311 (1990), 645-648.
		
		\bibitem{Kai-Mas1} Kaimanovich, V.A. and Masur, H. {\em The Poisson boundary of the mapping class group.} Invent. Math. 125 (1996), 221-264.
		
		\bibitem{Mah-Tio} Maher, J. and Tiozzo, G. {\em Random walks on weakly hyperbolic groups.} J. Reine Angew. Math. 742 (2018), 187-239.
		
		\bibitem{Hid} Masai, H. {\em On continuity of drifts of the mapping class group.} preprint (2019), arxiv.org/abs/1812.06651.
		
		\bibitem{Mas} Masur, H. {\em Uniquely ergodic quadratic differentials.} Comment. Math. Helv. 55 (1980), no. 2, 255-266. 
		
		\bibitem{Mas-Wol} Masur, H. and Wolf, M. {\em \teichmuller space is not Gromov hyperbolic.} Ann. Acad. Sci. Fenn. Ser. A I Math. 20 (1995), no. 2, 259-267.
		
		\bibitem{Mas-Min} Masur, H. and Minsky, Y. {\em Geometry of the complex of curves I: Hyperbolicity.} Invent. Math 138 (1999), no. 1, 103-149.
		
		\bibitem{Ran-Tio} Randecker, A. and Tiozzo, G. {\em Cusp excursion in hyperbolic manifolds and singularity of harmonic measure.} preprint (2019) arxiv.org/abs/1904.11581

		
	\end{thebibliography}
\end{document}